\newtheorem{thm}{Theorem}[section]
\newtheorem{theorem}[thm]{Theorem}
\newtheorem{cor}[thm]{Corollary}
\newtheorem{lemma}[thm]{Lemma}
\date{}
\begin{document}

\title [Cohen Lenstra Partitions] {Cohen Lenstra Partitions and Mutually Annihilating Matrices over a Finite Field}

\author{Jason Fulman}
\address{Department of Mathematics\\
        University of Southern California\\
        Los Angeles, CA, 90089, USA}
\email{fulman@usc.edu}

\author{Robert Guralnick}
\address{Department of Mathematics\\
        University of Southern California\\
        Los Angeles, CA, 90089, USA}
\email{guralnic@usc.edu}


\date{November 14, 2021}

%

\begin{abstract}
Motivated by questions in algebraic geometry, Yifeng Huang recently derived generating functions for counting mutually annihilating matrices and mutually annihilating nilpotent matrices over a finite field. We give a different derivation of his results using statistical properties of random partitions chosen from the Cohen-Lenstra measure.
\end{abstract}

\maketitle

\section{Introduction}

Motivated by questions in algebraic geometry, Yifeng Huang \cite{Hu} derived the following two generating functions.

\begin{equation} \label{eq1}
 \sum_{n \geq 0} \frac{|\{A,B \in Mat_n(F_q):AB=BA=0\}|}{|GL(n,q)|} u^n
= \frac{1}{1-u} \sum_{a \geq 0} \frac{u^a}{(1/q)_a (u/q)_a}.
\end{equation}

\begin{equation} \label{eq2}
\sum_{n \geq 0} \frac{|\{A,B \in Nil_n(F_q):AB=BA=0\}|}{|GL(n,q)|} u^n
= \frac{1}{(u/q)_{\infty}} \sum_{c \geq 0} \frac{u^{2c}}{q^{c^2} (1/q)_c (u/q)_c}.
\end{equation}

Here $Mat_n(F_q)$ is the set of $n \times n$ matrices over the finite field $F_q$, and $Nil_n(F_q)$ is the set of $n \times n$ nilpotent matrices over the finite field $F_q$. Also, we have used the (standard) notation
\[ (x)_i = (1-x)(1-x/q)(1-x/q^2) \cdots (1-x/q^{i-1}). \]

Our main purpose here is to show that these two generating functions can be rederived using statistical properties of a Cohen-Lenstra measure on partitions. Huang's lovely paper used analytic ideas related to Cohen-Lenstra heuristics, but our approach is probabilistic and different.

\section{Cohen-Lenstra random partitions}

To begin we give some notation. We let $\lambda$ be a partition of some non-negative integer $|\lambda|$ into parts $\lambda_1 \geq \lambda_2 \geq \cdots$. We let $m_i(\lambda)$ denote the number of parts of size $i$, and we define
\[ \lambda_i' = m_i(\lambda) + m_{i+1}(\lambda) + \cdots. \] So $\lambda_1'$ is the number of parts of $\lambda$, and for convenience we also denote this by $l(\lambda)$. Moreover, if one represents $\lambda$ by a diagram with row lengths $\lambda_1, \lambda_2, \cdots$, then $\lambda_i'$ is the size of the ith column of the diagram of $\lambda$.

In a very influential paper \cite{CL}, the number theorists Cohen and Lenstra defined a probability measure $P$ on the set of all partitions $\lambda$ of all natural numbers. The definition of the measure $P$ is given by the formula
\[ P(\lambda) = (1/q)_{\infty} \frac{1}{|Aut(\lambda)|}, \] where $Aut(\lambda)$ is the automorphism group of a finite abelian group of type $\lambda$. Although we won't use it, we mention the explicit formula
\[ |Aut(\lambda)| = q^{\sum_i (\lambda_i')^2} \prod_i (1/q)_{m_i(\lambda)} .\]

In later work, Fulman \cite{F1} studied a more general probability measure
\[ P_u(\lambda) = (u/q)_{\infty} \frac{u^{|\lambda|}}{|Aut(\lambda)|}. \]

We will use the following result from \cite{F2}, which gives a way to generate random partitions from the measure $P_u$.

\begin{theorem} \label{gen} Starting with $\lambda_0'=\infty$, define in succession $\lambda_1' \geq \lambda_2' \geq \cdots$ according to the rule that if $\lambda_i'=a$, then $\lambda_{i+1}'=b$ with probability
\[ K(a,b) = \frac{u^b (1/q)_a (u/q)_a}{q^{b^2} (1/q)_{a-b} (1/q)_b (u/q)_b}.\]
\end{theorem}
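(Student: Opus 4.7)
The plan is a direct telescoping computation. The probability that the Markov chain produces a given partition $\lambda$ is $\prod_{i \geq 0} K(\lambda_i', \lambda_{i+1}')$, which is really a finite product since $\lambda_i' = 0$ for large $i$ and $K(0,0) = 1$. I would show by telescoping that this product equals $P_u(\lambda)$.

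First, I would interpret the initial factor $K(\infty, \lambda_1')$ as a limit. As $a \to \infty$, the ratio $(1/q)_a / (1/q)_{a - \lambda_1'}$ is a product of $\lambda_1'$ factors each tending to $1$, while $(u/q)_a \to (u/q)_\infty$. So
\[
K(\infty, \lambda_1') = \frac{u^{\lambda_1'} (u/q)_\infty}{q^{(\lambda_1')^2} (1/q)_{\lambda_1'} (u/q)_{\lambda_1'}}.
\]

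Next I would organize the full product into four groups: the $u$-powers, the $q$-powers, the Pochhammer factors $(1/q)_{\lambda_i'} (u/q)_{\lambda_i'}$, and the factors $(1/q)_{\lambda_i' - \lambda_{i+1}'}$. The central observation is that the numerator $(1/q)_{\lambda_i'}(u/q)_{\lambda_i'}$ of the $i$-th kernel cancels the $(1/q)_{\lambda_i'}(u/q)_{\lambda_i'}$ in the denominator of the $(i-1)$-th kernel, so this group telescopes and leaves only $(u/q)_\infty$ in the numerator. The $u$-exponent sums to $\sum_{i \geq 1} \lambda_i' = |\lambda|$; the $q$-exponent to $\sum_{i \geq 1} (\lambda_i')^2$; and the remaining product is $\prod_i (1/q)_{m_i(\lambda)}$, since $m_i(\lambda) = \lambda_i' - \lambda_{i+1}'$ for $i \geq 1$. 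Combining these pieces and comparing with the displayed formula for $|Aut(\lambda)|$ yields
\[
\prod_{i \geq 0} K(\lambda_i', \lambda_{i+1}') = \frac{u^{|\lambda|} (u/q)_\infty}{|Aut(\lambda)|} = P_u(\lambda),
\]
as required.

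There is no substantive obstacle; the computation is essentially bookkeeping. The only mildly delicate points are the limiting treatment of the $\lambda_0' = \infty$ state, and the preliminary identity $\sum_b K(a, b) = 1$ for each finite $a$, which ensures $K$ is a genuine Markov kernel. That identity is a standard $q$-hypergeometric one, equivalent to $\sum_\lambda P_u(\lambda) = 1$, and can be verified directly or extracted a posteriori from the telescoping itself.
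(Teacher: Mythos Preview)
The paper does not prove this theorem; it quotes it from \cite{F2} without argument. Your proposal supplies a correct, self-contained proof. The telescoping computation is exactly right: the factors $(1/q)_{\lambda_i'}(u/q)_{\lambda_i'}$ cancel between consecutive kernel terms, the surviving $(u/q)_\infty$ comes from the limiting initial step, and the remaining pieces assemble into $u^{|\lambda|}/|Aut(\lambda)|$ via the explicit formula for $|Aut(\lambda)|$ displayed in the paper. Your limit evaluation of $K(\infty,b)$ is also correct.

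One small comment on your closing paragraph. The identity $\sum_{b=0}^{a} K(a,b)=1$ for each \emph{finite} $a$ is not quite ``extracted a posteriori from the telescoping itself'': the telescoping plus $\sum_\lambda P_u(\lambda)=1$ only yields the global statement that the infinite product of the row sums equals $1$, not that each row sum equals $1$. You do need the $q$-identity (a form of the $q$-Chu--Vandermonde summation) separately to know that $K(a,\cdot)$ is an honest probability distribution for each finite $a$. This is indeed standard, as you say, but it is logically independent of the telescoping step.
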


This gives the following corollary, the first part of which will be used in proving equation \eqref{eq1} and the second part of which will be used in proving equation \eqref{eq2}.

\begin{cor} \label{twosums}
\begin{enumerate}
\item For a non-negative integer $a$,
\[ \sum_{\lambda: \lambda_1'=a} P_u(\lambda) = \frac{(u/q)_{\infty}}{(u/q)_a} \frac{u^a}{q^{a^2} (1/q)_a} .\]
\item For non-negative integers $a \geq b$,
\[ \sum_{\lambda: \lambda_1'=a, m_1(\lambda)=b} P_u(\lambda) =
\frac{u^{2a-b} (u/q)_{\infty}}{q^{a^2+(a-b)^2} (1/q)_b (1/q)_{a-b} (u/q)_{a-b}}.\] \end{enumerate}
\end{cor}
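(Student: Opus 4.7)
The plan is to derive both identities as direct consequences of Theorem~\ref{gen}, viewing the left-hand sums as marginal probabilities of the Markov chain on $(\lambda_1', \lambda_2', \ldots)$ that generates $\lambda$ from $P_u$.

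For part (1), the event $\{\lambda_1' = a\}$ depends only on the very first step of the chain starting from $\lambda_0' = \infty$, so summing over all continuations collapses to
\[ \sum_{\lambda:\lambda_1' = a} P_u(\lambda) = K(\infty, a), \]
where $K(\infty, a) := \lim_{a' \to \infty} K(a', a)$. In the formula for $K(a', a)$, the only factors depending on $a'$ are $(1/q)_{a'}$ and $(u/q)_{a'}$ in the numerator and $(1/q)_{a'-a}$ in the denominator. As $a' \to \infty$ these tend to $(1/q)_\infty$, $(u/q)_\infty$, and $(1/q)_\infty$ respectively, so the two copies of $(1/q)_\infty$ cancel and what remains matches the right-hand side of (1) on the nose.

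For part (2), I would use $m_1(\lambda) = \lambda_1' - \lambda_2'$ to rewrite the event $\{\lambda_1' = a,\, m_1(\lambda) = b\}$ as $\{\lambda_1' = a,\, \lambda_2' = a - b\}$. By the Markov property, this event has probability $K(\infty, a) \cdot K(a, a - b)$. Plugging in the expression for $K(\infty, a)$ from part (1) and the formula for $K$ with second argument $a-b$ (so that $(1/q)_{a-b}$ appears in the denominator from the first Pochhammer factor and $(1/q)_b$ from the second), one sees that the factors $(1/q)_a$ and $(u/q)_a$ cancel between the two kernels, leaving exactly the right-hand side of (2).

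The only mildly subtle point is ensuring that $K(\infty, a)$ is well defined as a limit and genuinely represents the initial transition probability, and this is immediate from the three-factor analysis above. Beyond that, the proof amounts to routine $q$-Pochhammer bookkeeping, with no real obstacle to overcome.
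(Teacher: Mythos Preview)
Your proposal is correct and follows essentially the same route as the paper: both parts are read off directly from Theorem~\ref{gen} by identifying the sums with $K(\infty,a)$ and $K(\infty,a)K(a,a-b)$ respectively, using $m_1(\lambda)=\lambda_1'-\lambda_2'$ for the second. Your write-up simply makes explicit the limit computation for $K(\infty,a)$ and the cancellation of $(1/q)_a(u/q)_a$ in the product for part~(2), which the paper leaves to the reader.
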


\begin{proof} For the first result, Theorem \ref{gen} implies that
\[ \sum_{\lambda: \lambda_1'=a} P_u(\lambda) = K(\infty,a) .\]

For the second result, note that $m_1(\lambda) = \lambda_1' - \lambda_2'$. So Theorem \ref{gen} implies that
\[ \sum_{\lambda: \lambda_1'=a, m_1(\lambda)=b} P_u(\lambda) = K(\infty,a) K(a,a-b) \] and the result follows.
\end{proof}

\section{Mutually annihilating matrices}

The purpose of this section is to rederive \eqref{eq1} and \eqref{eq2}.

To begin, we recall the Jordan form of an element of $Mat_n(F_q)$. This associates to each monic, irreducible polynomial $\phi$ over $F_q$ a partition $\lambda_{\phi}$ such that \[ \sum_{\phi} d(\phi)|\lambda_{\phi}| = n, \] where $d(\phi)$ is the degree of $\phi$. For further background on Jordan forms over finite fields, one can consult Chapter 6 of \cite{He}.

Lemma \ref{counter} is proved in Stong \cite{St} and calculates the number of elements of $Mat_n(F_q)$ with given Jordan form.

\begin{lemma} \label{counter} Suppose that \[ \sum_{\phi} d(\phi) |\lambda_{\phi}| = n, \] so that $\{ \lambda_{\phi} \}$ is a possible Jordan form of an element of $Mat_n(F_q)$. Then the number of elements of $Mat_n(F_q)$ with Jordan form $\{ \lambda_{\phi} \}$ is equal to
\[ \frac{|GL(n,q)|}{\prod_\phi |Aut(\lambda_{\phi})|_{q \rightarrow q^{d(\phi)}}}.\]
\end{lemma}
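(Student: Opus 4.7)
My plan is to invoke the orbit-stabilizer theorem applied to the conjugation action of $GL(n,q)$ on $Mat_n(F_q)$. Two matrices have the same Jordan form data $\{\lambda_\phi\}$ if and only if they are conjugate by an element of $GL(n,q)$, so the set of matrices with Jordan form $\{\lambda_\phi\}$ is a single orbit. Thus its cardinality equals $|GL(n,q)|/|C_{GL(n,q)}(A)|$ for any representative $A$, and it suffices to show that $|C_{GL(n,q)}(A)| = \prod_\phi |Aut(\lambda_\phi)|_{q \to q^{d(\phi)}}$.

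To compute the centralizer, I would translate the problem into $F_q[x]$-module theory. Endow $V = F_q^n$ with the structure of an $F_q[x]$-module where $x$ acts as $A$. Since the commutant of $A$ inside $End_{F_q}(V)$ is exactly $End_{F_q[x]}(V)$, its unit group gives $C_{GL(n,q)}(A) = Aut_{F_q[x]}(V)$. The primary decomposition of finitely generated torsion $F_q[x]$-modules yields $V = \bigoplus_\phi V_\phi$, where $V_\phi$ is the $\phi$-primary component. Since each $V_\phi$ is an intrinsic submodule (the set of elements killed by a power of $\phi$), every $F_q[x]$-automorphism of $V$ preserves this decomposition, giving
\[ |C_{GL(n,q)}(A)| = \prod_\phi |Aut_{F_q[x]}(V_\phi)|. \]

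For each irreducible $\phi$ of degree $d=d(\phi)$, the module $V_\phi$ is isomorphic to $\bigoplus_i F_q[x]/(\phi^{(\lambda_\phi)_i})$. Its $F_q[x]$-automorphism group coincides with its automorphism group over the localization, and then over the completion $R_\phi$ at the maximal ideal $(\phi)$. Since $R_\phi$ is a complete DVR with residue field $F_{q^d}$, there is a (non-canonical) isomorphism $R_\phi \cong F_{q^d}[[t]]$, under which $V_\phi$ becomes a finite $F_{q^d}[[t]]$-module of type $\lambda_\phi$. The automorphism count of such a module depends only on $\lambda_\phi$ and the residue field size $q^d$, and it is given by exactly the same formula as the order of the automorphism group of a finite abelian $p$-group of type $\lambda_\phi$, but with $q$ replaced by $q^d$. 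This is the quantity $|Aut(\lambda_\phi)|_{q \to q^{d(\phi)}}$, and taking the product over $\phi$ finishes the proof.

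The main obstacle is the final identification: one must verify carefully that the automorphism group of a module of type $\lambda$ over a complete discrete valuation ring depends only on $\lambda$ and on the cardinality of the residue field. This is a standard computation, usually carried out by filtering $Aut_{R_\phi}(V_\phi)$ by the natural congruence subgroups modulo powers of the maximal ideal and counting lifts stratum-by-stratum using Hall's algorithm; the argument is purely formal and transfers from the familiar $\mathbb{Z}_p$ setting to $F_{q^d}[[t]]$ by replacing $p$ with $q^d$ throughout. Once this module-theoretic bookkeeping is in place, Lemma \ref{counter} reduces to a one-line orbit-stabilizer application.
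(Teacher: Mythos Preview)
Your argument is correct and follows the standard route: orbit--stabilizer reduces the count to the centralizer order, the $F_q[x]$-module viewpoint identifies the centralizer with $\mathrm{Aut}_{F_q[x]}(V)$, primary decomposition splits this as a product over $\phi$, and each factor is the automorphism group of a module of type $\lambda_\phi$ over a complete DVR with residue field $F_{q^{d(\phi)}}$, whose order is the explicit polynomial in $q^{d(\phi)}$ that the paper denotes $|Aut(\lambda_\phi)|_{q\to q^{d(\phi)}}$.

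The paper, however, does not prove this lemma at all: it simply attributes the result to Stong \cite{St} and uses it as a black box. So there is no ``same approach'' to compare against; you have supplied an actual proof where the paper gives only a citation. Your sketch is essentially the argument one finds in Stong (and in Macdonald's appendix on $GL_n(F_q)$ conjugacy classes), so nothing is lost. The only place to be careful, as you note, is the claim that $|\mathrm{Aut}_R(M)|$ for $M$ of type $\lambda$ over a complete DVR $R$ depends only on $\lambda$ and $|R/\mathfrak{m}|$; this is indeed a routine filtration computation and does not require anything specific to $\mathbb{Z}_p$.
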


Here the notation $|Aut(\lambda_{\phi})|_{q \rightarrow q^{d(\phi)}}$ means that we place $q$ by $q^{d(\phi)}$ in the formula for $|Aut(\lambda_{\phi})|$.

\subsection{Proof of \eqref{eq1}}

The following lemma is crucial to deriving \eqref{eq1}.

\begin{lemma} \label{howman} Let $A$ be an element of $Mat_n(F_q)$. Then the number of $B$ such that $AB=BA=0$ is equal to $q^{m^2}$, where $m$ is the number of Jordan blocks of $A$ with eigenvalue $0$.
\end{lemma}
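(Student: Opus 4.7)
The plan is to reinterpret the two equations $AB=0$ and $BA=0$ as factorization conditions on $B$ as a linear map $V \to V$, with $V = F_q^n$, and then count the resulting maps directly. The condition $AB=0$ is equivalent to $\mathrm{im}(B) \subseteq \ker(A)$, and the condition $BA=0$ is equivalent to $\mathrm{im}(A) \subseteq \ker(B)$. Together, these say precisely that $B$ vanishes on $\mathrm{im}(A)$ and takes values in $\ker(A)$, so $B$ descends uniquely to a linear map
\[ \bar{B} : V/\mathrm{im}(A) \longrightarrow \ker(A), \]
and conversely any such $\bar{B}$ lifts to a unique $B$ with $AB=BA=0$.

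Hence the set we are counting is in bijection with $\mathrm{Hom}_{F_q}(V/\mathrm{im}(A),\ker(A))$, which has size $q^{\dim\ker(A)\cdot\dim(V/\mathrm{im}(A))}$. By rank-nullity, $\dim(V/\mathrm{im}(A)) = n - \dim\mathrm{im}(A) = \dim\ker(A)$, so the count simplifies to $q^{(\dim\ker A)^2}$.

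It remains to identify $\dim \ker(A)$ with the number $m$ of Jordan blocks of $A$ having eigenvalue $0$. Decomposing $V$ as a direct sum of generalized eigenspaces, a Jordan block with nonzero eigenvalue is invertible and contributes $0$ to the kernel, while each nilpotent Jordan block $J_k$ of size $k$ satisfies $\dim\ker(J_k)=1$ (spanned by a single end vector). Summing over blocks yields $\dim\ker(A)=m$, and therefore the number of valid $B$ equals $q^{m^2}$.

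The argument is essentially a two-line linear algebra computation; the only point that requires any care is recognizing that the two relations $AB=BA=0$ combine cleanly into a single factorization through the canonical subquotient $V/\mathrm{im}(A) \to \ker(A)$, after which counting is routine.
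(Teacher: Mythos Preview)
Your proof is correct and is genuinely different from the paper's approach. The paper first reduces to the nilpotent part of $A$ by writing $A=\mathrm{diag}(C,N)$ with $C$ invertible, then analyzes the block matrix $B=(B_{ij})$ with respect to the Jordan block decomposition of the nilpotent piece, noting that $B$ lies in the centralizer of $A$ and that each block $B_{ij}$ has a one-dimensional space of choices compatible with $AB=0$. Your argument bypasses both the Jordan form and the centralizer entirely: you observe that the pair of conditions $AB=BA=0$ is exactly the statement that $B$ factors through the canonical map $V/\mathrm{im}(A)\to\ker(A)$, so the count is $q^{(\dim\ker A)^2}$, and Jordan form enters only at the end to identify $\dim\ker A$ with $m$.

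Your route is shorter and more conceptual for this particular lemma. The paper's route, however, has the advantage that its centralizer-block framework carries over directly to the next lemma (counting \emph{nilpotent} $B$ with $AB=BA=0$), where the nilpotency constraint is not a linear condition and your factorization bijection no longer suffices on its own.
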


\begin{proof} Clearly the number we are computing is a similarity invariant: if $A$ is replaced by $UAU^{-1}$, the set of $B$ such that $AB=BA=0$ is replaced by $UBU^{-1}$.
Now write $A=diag(C,N)$ where $C$ is invertible and $N$ is nilpotent (in Jordan form). If $AB=BA=0$, then $B$ commutes with $A$ and so $B = diag(0, L)$ with $NL=LN =0$.

So it reduces to looking at the nilpotent part of $A$. So assume that $A$ is nilpotent with Jordan blocks corresponding to a given partition and consider its centralizer.

If $B$ is in the centralizer, then we can write $B = (B_{ij})$ blocking it up with respect to the Jordan blocks. Then $AB=0$ if and only if the image of $B$ is in the kernel of $A$.
  Note that if we write $A = diag(J_1, ..., J_m)$ with the $J_i$ Jordan blocks then $AB = (J_iB_{ij})$ and there is a one dimensional space of possible $B_{ij}$ with this property (with the $B$ in the centralizer). Thus the set of $B$ with $AB=BA=0$ has dimension $m^2$ where m is the number of Jordan blocks.
\end{proof}

Now we proceed to the main result of this subsection.

\begin{proof} (Of \eqref{eq1}) It follows from Lemmas \ref{counter} and \ref{howman} that the number of pairs $A,B$ in $Mat_n(F_q)$ such that $AB=BA=0$ is equal to $|GL(n,q)|$ multiplied by
\[ \sum_{\{\lambda_{\phi}\}} \frac{q^{l(\lambda_z)^2} }{\prod_\phi |Aut(\lambda_{\phi})|_{q \rightarrow q^{d(\phi)}}}, \] where the sum is over all Jordan forms of $Mat_n(F_q)$ and $l(\lambda_z)$ is the number of parts of $\lambda_z$.

Separating out the contribution from the polynomial $\phi(z)=z$, we obtain $|GL(n,q)|$ multiplied by the coefficient of $u^n$ in
\[ \sum_{\lambda} q^{(\lambda_1')^2} \frac{u^{|\lambda|}}{|Aut(\lambda)|}
\cdot \prod_{\phi \neq z} \sum_{\lambda} \frac{u^{d(\phi)|\lambda|}}
{|Aut(\lambda)|_{q \rightarrow q^{d(\phi)}}} .\]

Now for any finite group, the sum over conjugacy classes of the reciprocal of centralizers sizes is equal to $1$. Applying this to $GL(n,q)$ gives that
\[ \prod_{\phi \neq z} \sum_{\lambda} \frac{u^{d(\phi)|\lambda|}}
{|Aut(\lambda)|_{q \rightarrow q^{d(\phi)}}} = \frac{1}{1-u} .\]

We conclude that
\[ \sum_{n \geq 0} \frac{|\{A,B \in Mat_n(F_q):AB=BA=0\}|}{|GL(n,q)|} u^n \]
is equal to \[ \frac{1}{1-u} \sum_{\lambda} q^{(\lambda_1')^2} \frac{u^{|\lambda|}}{|Aut(\lambda)|}.\] This is exactly
\[ \frac{1}{1-u} \frac{1}{(u/q)_{\infty}} \sum_{a \geq 0} q^{a^2}
\sum_{\lambda: \lambda_1'=a} P_u(\lambda).\] By part 1 of Corollary \ref{twosums},
this is equal to \[ \frac{1}{1-u} \sum_{a \geq 0} \frac{u^a}{(1/q)_a (u/q)_a}, \]
as claimed.
\end{proof}

\subsection{Proof of \eqref{eq2}}

The following lemma is crucial to deriving \eqref{eq2}.

\begin{lemma} \label{howman2} Let $A$ be a nilpotent element of $Mat_n(F_q)$. Then the number of nilpotent $B$ such that $AB=BA=0$ is equal to $q^{m^2-d}$, where $m$ is the number of Jordan blocks of $A$ and $d$ is the number of Jordan blocks of $A$ of size $1$.
\end{lemma}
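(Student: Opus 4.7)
The plan is to build on Lemma \ref{howman} by parameterizing the $q^{m^2}$ solutions $B$ of $AB=BA=0$ explicitly, and then picking out those that are nilpotent. I would reduce to $A = \mathrm{diag}(J_1,\ldots,J_m)$ in nilpotent Jordan form with block sizes $\lambda_1,\ldots,\lambda_m$, exactly $d$ of which equal $1$. Let $V_i$ denote the subspace for $J_i$ with basis $e_1^{(i)},\ldots,e_{\lambda_i}^{(i)}$, where $J_i e_1^{(i)}=0$ and $J_i e_k^{(i)}=e_{k-1}^{(i)}$ for $k>1$. Writing $B=(B_{ij})$ in the compatible block form, the conditions $J_i B_{ij}=0$ (from $AB=0$) and $B_{ij}J_j=0$ (from $BA=0$) force the unique possibly-nonzero entry of $B_{ij}$ to sit at position $(1,\lambda_j)$; call this scalar $b_{ij}\in F_q$. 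These give exactly the $m^2$ free parameters appearing in Lemma \ref{howman}.

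Next I would determine when such a $B$ is nilpotent on $V=\bigoplus_i V_i$. The image of $B$ is contained in $W:=\mathrm{span}\{e_1^{(i)} : 1\le i\le m\}$, and a direct check shows $B(W)\subseteq W$, so $W$ is $B$-invariant and $B$ is nilpotent iff $B|_W$ is nilpotent. Split the block indices into $T=\{i:\lambda_i>1\}$ and $S=\{i:\lambda_i=1\}$, so $|S|=d$. Since $Be_1^{(j)}=0$ for $j\in T$ (the nonzero entry of $B_{ij}$ lives at column $\lambda_j>1$), while for $j\in S$ we have $e_1^{(j)}=e_{\lambda_j}^{(j)}$ and $Be_1^{(j)}=\sum_i b_{ij}e_1^{(i)}$, the matrix of $B|_W$ in the basis $(e_1^{(i)})$, with $T$-indices listed before $S$-indices, is block upper triangular of the shape
\[
\begin{pmatrix} 0_{T,T} & (b_{ij})_{i\in T,\,j\in S} \\ 0_{S,T} & (b_{ij})_{i\in S,\,j\in S} \end{pmatrix}.
\]

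Consequently $B|_W$ is nilpotent iff the $d\times d$ block $M:=(b_{ij})_{i,j\in S}$ is nilpotent, while the remaining $m^2-d^2$ scalars $b_{ij}$ with $(i,j)\notin S\times S$ are unconstrained. By the classical Fine--Herstein theorem, the number of nilpotent $d\times d$ matrices over $F_q$ equals $q^{d^2-d}$, so the total count is $q^{m^2-d^2}\cdot q^{d^2-d}=q^{m^2-d}$, as claimed. I expect the main obstacle to be pinning down the invariant subspace $W$ and verifying the block upper triangular structure cleanly; once that is in place, the reduction to the Fine--Herstein count is immediate, and the boundary cases $d=0$ (all blocks large, every $B$ squares to zero) and $d=m$ (so $A=0$, and one simply counts nilpotent $m\times m$ matrices) serve as useful sanity checks.
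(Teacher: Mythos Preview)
Your proof is correct and reaches the same endpoint as the paper's argument, but the key reduction step is handled differently. The paper, after parameterizing $B$ by the scalars $b_{ij}$ just as you do, invokes the structure of the centralizer algebra of $A$: modulo its Jacobson radical this is $\prod_i M_{d_i}(q)$ where $d_i$ counts the Jordan blocks of size $i$, and one argues that for block sizes $>1$ the relevant $B_{ij}$ already lie in the radical (so $B^2=0$ on that part), leaving only the factor $M_{d_1}(q)=M_d(q)$ to analyze. Your argument bypasses this algebraic machinery entirely: you exhibit the invariant subspace $W=\ker A$, use $\mathrm{im}(B)\subseteq W$ to reduce nilpotency of $B$ to nilpotency of $B|_W$, and then read off the answer from the block-triangular shape of $B|_W$. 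This is more elementary and fully self-contained, at the cost of being slightly more computational. Both routes finish identically with the Fine--Herstein count of nilpotent $d\times d$ matrices.
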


\begin{proof}  We argue as in the proof of Lemma \ref{howman}.  So let $B=(B_{ij})$ commute with and annihilate $A$.  We saw there is a one dimensional
choice for each $B_{ij}$ independently.   It is well known that the centralizer of $A$ modulo the Jacobson radical is a direct product
$M_{d_1}(q) \times \ldots \times M_{d_r}(q)$ where $d_i$ is the number of Jordan blocks of size $i$.  Thus, our computation reduces to the case where
all Jordan blocks have the same size $d$.  If $i  >1$, we see that $BA=0$ implies that the image of $B$ is contained in the kernel of $A$ and so
$B$ is in the Jacobson radical.   In this case any $B$ with $AB=BA=0$ is nilpotent (and indeed $B^2=0$).   If $i=1$ and $d=d_1$,  then the centralizer is the full
matrix ring  $M_d(q)$ and it is well known (see \cite{FH} or \cite{Ge}) that the number of nilpotent matrices of size $d$ is $q^{d^2-d}$. This completes the proof.
\end{proof}

Now we proceed to the main result of this subsection.

\begin{proof} (Of \eqref{eq2}) From Lemma \ref{counter}, the number $n \times n$ of nilpotent matrices over $F_q$ with Jordan type $\lambda$ is equal to \[ \frac{|GL(n,q)|}{|Aut(\lambda)|}.\] So Lemma \ref{howman2} implies that
\[ \sum_{n \geq 0} \frac{|\{A,B \in Nil_n(F_q):AB=BA=0\}|}{|GL(n,q)|} u^n \] is equal to \[ \sum_{\lambda} \frac{u^{|\lambda|}}{|Aut(\lambda)|} q^{(\lambda_1')^2-m_1(\lambda)}, \] where the sum is over all partitions of all natural numbers.

This is equal to
\[ \frac{1}{(u/q)_{\infty}} \sum_{\lambda} P_u(\lambda) q^{(\lambda_1')^2-m_1(\lambda)}.\]

By part 2 of Corollary \ref{twosums}, this is equal to
\[ \sum_{a \geq b \geq 0} \frac{u^{2a-b}}{q^{(a-b)^2+b} (1/q)_b (1/q)_{a-b} (u/q)_{a-b}}.\] Letting $c=a-b$, this becomes
\begin{eqnarray*}
\sum_{b,c \geq 0} \frac{u^{2c+b}}{q^{c^2+b} (1/q)_b (1/q)_c (u/q)_c}
& = & \sum_{b \geq 0} \frac{u^b}{q^b (1/q)_b} \sum_{c \geq 0} \frac{u^{2c}}{q^{c^2} (1/q)_c (u/q)_c} \\
& = & \frac{1}{(u/q)_{\infty}} \sum_{c \geq 0} \frac{u^{2c}}{q^{c^2} (1/q)_c (u/q)_c}, \end{eqnarray*} where the last step used the well known identity
\[ \sum_{b \geq 0} \frac{u^b}{q^b (1/q)_b} = \frac{1}{(u/q)_{\infty}}.\]
\end{proof}

\section{Acknowledgments} Fulman was supported by Simons Foundation Grant 400528. Guralnick was supported by NSF Grant DMS-1901595 and Simons Fellowship 609771. We thank Yifeng Huang for useful discussions.

\end{document}